\theoremstyle{plain}
\newtheorem{theorem}{Theorem}[section]
\newtheorem{theorem*}{Theorem}
\newtheorem{lemma}[theorem]{Lemma}
\newtheorem{corollary}[theorem]{Corollary}
\newtheorem{prop}[theorem]{Proposition}
\newtheorem{const}[theorem]{Construction}
\author{James Phillips}
\title{One-Point Covers of Elliptic Curves and Good Reduction}
\begin{document}

\maketitle

\begin{abstract}

Raynaud gave a criterion for a branched $G$-cover of curves defined over a mixed-characteristic discretely valued field $K$ with residue characteristic $p$ to have good reduction in the case of either a three-point cover of $\mathbb{P}^1$ or a one-point cover of an elliptic curve. Specifically, such a cover has potentially good reduction whenever $G$ has a Sylow $p$-subgroup of order $p$ and the absolute ramification index of $K$ is less than the number of conjugacy classes of order $p$ in $G$. In the case of an elliptic curve, we generalize this to the case in which $G$ has an arbitrarily large cyclic Sylow $p$-subgroup.

\end{abstract}

\section{Introduction}

Let $f:Y \to X$ be a $G$-Galois cover branched at $r$ points. For the case of $X = \mathbb{P}^1$ and $r=3$, in \cite{raynaud99}, Raynaud gives a criterion for such covers to have good reduction to characteristic $p$ over mixed-characteristic discretely valued fields. Specifically, $f$ has good reduction whenever $p$ strictly divides the order of the Galois group, $G$, of this cover and the absolute ramification index of the field is less than the number of conjugacy classes of elements of order $p$ in $G$. Motivating this was the surjection due to Grothendieck (\cite{sga1}) of tame fundamental groups:

\[pr: \pi_1(\mathbb{P}^1_K \setminus \{0,1,\infty\})^{p-tame} \to \pi_1(\mathbb{P}^1_k \setminus \{0,1,\infty\})^{tame}. \]

Understanding the kernel of this map amounts to determining the $p$-tame three-point covers having good reduction.

In the same paper, Raynaud suggested that one may obtain a similar result for both three-point covers and \textit{one-point covers} (that is to say, covers of elliptic curves branched over one point) whose Galois groups have arbitrarily large cyclic Sylow $p$-subgroups. Indeed, Raynaud's own result holds also in the one-point cover case, and the two cases often have analogous properties: for instance, both have the same \'etale fundamental group. Moreover, in \cite{obus17}, Obus obtained this very result in the three-point case. We complete the analogy in the following:

\begin{theorem*}

Let $G$ be a finite group with cyclic Sylow $p$-subgroup. Let $k$ be an algebraically closed field of positive characteristic $p$, $K_0 = \text{Frac}(W(k))$, and $K$ be a finite extension of $K_0$ such that the absolute ramification index of $K$ is less than the number of conjugacy classes of order $p$ in $G$. Let $f: Y \to E$ be a one-point $G$-cover defined over $K$, where $E$ is an elliptic curve having good reduction. Then $f$ has potentially good reduction.

\end{theorem*}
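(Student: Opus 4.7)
The plan is to adapt the strategy of Obus \cite{obus17}, who settled the analogous statement for three-point covers of $\mathbb{P}^1$ with cyclic Sylow $p$-subgroup of arbitrary order, to the one-point cover of an elliptic curve. After replacing $K$ by a finite extension one may assume that a stable model $f_{\mathrm{st}}\colon Y_{\mathrm{st}} \to E_{\mathrm{st}}$ of the cover exists. Since $E$ has good reduction and $f$ is branched only at a single point $b \in E(K)$, the special fiber $\bar{E}_{\mathrm{st}}$ is the smooth reduction $\bar{E}$ with a (possibly empty) tree $T$ of projective lines sprouting from the specialization $\bar{b}$. Potentially good reduction for $f$ is equivalent to $T$ becoming empty after a further finite extension of $K$, which I shall assume fails for the sake of contradiction.

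Each wild tail of $T$ (a terminal $\mathbb{P}^1$-component carrying nontrivial wild ramification) bears, by Raynaud's theory of stable reduction of Galois covers \cite{raynaud99}, an inertia group whose order is divisible by $p$; cyclicity of the Sylow $p$-subgroup of $G$ forces this inertia to be cyclic of $p$-power order, and its unique subgroup of order $p$ determines a well-defined conjugacy class of order-$p$ elements in $G$. The auxiliary cover construction of Raynaud---extended by Obus to the higher cyclic case---then reduces the problem to the setting where inertia on each wild tail has order exactly $p$, while preserving this conjugacy-class assignment. With that reduction in hand, the proof has two main remaining steps: first, show that distinct wild tails produce distinct conjugacy classes of order $p$; second, establish a lower bound on the number of wild tails in terms of the absolute ramification index $e$ of $K$.

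The second step is the anticipated main obstacle. In Obus's three-point work the required inequality follows from a careful Hurwitz-tree analysis together with the depth invariants attached to each component of the stable reduction, and yields that if bad reduction persists then the number of wild tails is at least the number of order-$p$ conjugacy classes appearing in $G$. Transposing this argument to the elliptic setting requires replacing the three-point combinatorics with those of a one-point cover of $E$; the structural difference is only in the global Euler-characteristic bookkeeping---the root $\bar{E}$ has genus one with one puncture rather than genus zero with three---while the local tail-by-tail analysis, including the conjugacy-class-distinctness argument and the depth-versus-$e$ estimate, should carry over essentially verbatim. Once the requisite bound is established, the hypothesis that $e$ is strictly less than the number of conjugacy classes of order $p$ in $G$ yields the sought contradiction, forcing $T$ to be empty over a sufficient finite extension and proving that $f$ has potentially good reduction.
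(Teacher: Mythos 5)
Your proposal does not follow the paper's argument, and its central mechanism breaks down precisely where the cyclic Sylow hypothesis matters. The paper's proof is a contradiction between two statements about the \emph{original component} itself, not about counting tails: Corollary \ref{multiplicativereduction} shows that a branched one-point cover with bad reduction must have \emph{multiplicative reduction} over the original component (via the vanishing cycles formula forcing the strong auxiliary cover to be of multiplicative type, together with the computation of $\bar{J}[p]_{\bar{\chi}}$ in Proposition \ref{jacobian}), while Proposition \ref{badreduction} --- the transposition of Obus's Proposition 5.1, and the only place the hypothesis $e(K) < (p-1)/m_G =$ (number of order-$p$ conjugacy classes) is used --- shows that over so small a field the stable model \emph{cannot} have multiplicative reduction over the original component. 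Your proposal instead tries to run a Raynaud-style conjugacy-class count on the tails, and this has two concrete defects. First, the claim that the auxiliary cover ``reduces the problem to the setting where inertia on each wild tail has order exactly $p$'' is false: the strong auxiliary cover has Galois group $\mathbb{Z}/p^s \rtimes \mathbb{Z}/m$ with $s$ as large as in $G$, and components of the special fiber can be $p^i$-components for any $i \leq s$. Being unable to collapse to order $p$ is exactly the difficulty of the cyclic case, and is why Obus (and this paper) argue via torsor structure rather than extending Raynaud's count. Second, your assignment of a conjugacy class of order-$p$ \emph{elements} to each tail via ``its unique subgroup of order $p$'' is vacuous: since the Sylow $p$-subgroup is cyclic, all order-$p$ subgroups of $G$ are conjugate, so this invariant cannot distinguish tails; one needs a distinguished generator extracted from finer local data, and you give no such construction. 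The two ``main remaining steps'' you identify --- distinctness of the classes and a lower bound on the number of tails in terms of $e$ --- are precisely the content that would have to be supplied, and the assertion that they ``carry over essentially verbatim'' from \cite{obus17} is not accurate, because \cite{obus17} does not argue this way.

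Separately, you do not treat the case where the one-point cover is actually unramified: then $Y$ has genus $1$ and the stable-model formalism for curves of genus at least $2$ does not apply. The paper disposes of this case with Lemma \ref{unbranched} (an \'etale cover of $E$ is an isogeny, and isogenous elliptic curves have the same reduction type), and it also invokes Raynaud's Lemme 4.2.13 to guarantee that the branching indices are tame before any of the stable-reduction machinery is brought to bear; both points are missing from your sketch.
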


Raynaud's proof relied on the study of the stable reduction of such covers. One obtains the stable model after at most a finite base change of the ground field. The Galois group of this extension is called the \textit{monodromy group} of the cover and its $p$-part is called the \textit{wild monodromy}. Raynuad showed that this wild monodromy of such an $f$ is trivial, which sufficed to conclude potentially good reduction.

An analogous result holds for covers having cyclic Sylow $p$-subgroups, which is also due to Obus in \cite{obus12}. He instead, however, concluded good reduction from the torsor structure of the cover at specific components. We follow a similar route. We must account for the fact that a cover of an elliptic curve need not ramify, so that we have a ``borderline" case which does not arise in the case of $\mathbb{P}^1$. Such covers, however, are necessarily isogenies of elliptic curves, so this does not present much new difficulty.

In \S2, we recall some facts regarding stable models of covers and their ramificaiton. In particular, we have Obus' (\cite{obus12}) \textit{vanishing cycles formula}, which places restrictions on the ramification based on the genus of the base curve and the number of branch points. In \S3, we review the notion of \textit{multiplicative reduction}. One can view reduction to characteristic $p$ as ``losing" information and multiplicative reduction as being a way of ``recovering" some of this lost information. We also outline the construction of the \textit{auxiliary cover}, a cover obtained from the original cover but whose Galois group will be of the form $\mathbb{Z}/p^s \rtimes \mathbb{Z}/m$, where $m$ is prime to $p$. We prove an analogue of a result of Wewers (\cite{wewers03}) on Jacobians of tame cyclic covers of elliptic curves to detect multiplicative reduction in this auxiliary cover. Finally, in \S4, we adapt a result of Obus (\cite{obus17}) to show that a one-point cover defined over a sufficiently small field (for instance, one described in Theorem 1) having bad reduction cannot have multiplicative reduction over certain components, allowing us to conclude good reduction.

\subsection{Notation}

Throughout this paper, a \textit{one-point cover} will refer to a cover of an elliptic curve branched over one point; we typically assume this point to be the origin. A \textit{$G$-cover} of $K$-curves refers to a finite map of curves $f: Y \to X$, where both $X$ and $Y$ are smooth, projective, and geometrically integral $K$-curves and the extension $K(Y)/K(X)$ is Galois with group $G$. When $G$ has a cyclic Sylow $p$-subgroup, $P$, we denote by $m_G$ the value $|N_G(P)|/Z_G(P)|$.

Given an arbitrary scheme morphism $f:Y \to X$ and a finite group $G$ having a subgroup $H$ with $H \subseteq \text{Aut}(Y/X)$, $\text{Ind}_H^G f: \text{Ind}_H^G Y \to X$ is the map obtained by taking the disjoint union of $[G:H]$ copies of $Y$ and applying $f$ to each copy.

\section*{Acknowledgements}

The author would like to thank Andrew Obus for his invaluable comments, discussions, and patience throughout this project.

\section{Stable reduction}

\subsection{The stable model}

Let $R$ be a complete discrete valuation ring with characteristic 0 fraction field $K$ and and positive characteristic $p$ residue field $k$. Let $X$ be a curve defined over $K$ having good reduction; we denote a fixed good model of $X$ by $X_R$. Let $f: Y \to X$ be a $G$-Galois cover defined over $K$ with prime-to-$p$ branching indices, where $G$ is a finite group with cyclic Sylow $p$-subgroup and $Y$ is a curve of genus at least 2; we suppose that the branch points of $f$ are defined over $K$ and specialize to distinct points of $X_k = X_R \times_R k$.

We know from Deligne and Mumford (\cite{dm}, Corollary 2.7) that there is a finite extension of $K$ for which we have a \textit{stable model} $Y^{st}$ of $Y$ over the valuation ring of this extension; that is to say, the special fiber $\bar{Y}$ has only ordinary double points as singularities and any irreducible component of $\bar{Y}$ having genus 0 contains at least 3 marked points n our situation, (that is, ramification points or points of intersection with the rest of $\bar{Y}$).

We combine this with the work of Raynaud in \cite{raynaud99} and Liu in \cite{liu06} to obtain the \textit{stable model} of $f$, $f^{st}: Y^{st} \to X^{st}$, where $Y^{st}$ is the stable model of $Y$, as above, $X^{st} = Y^{st} / G$, and the ramification points on the generic fiber of $f$ specialize to distinct smooth points of $\bar{Y}$. The cover $f^{st}$ is defined over the valuation ring $R^{st}$ of a minimal finite extension $K^{st}$ of $K$.

We call the special fiber $\bar{f}: \bar{Y} \to \bar{X}$ the \textit{stable reduction} of $f$. When $\bar{Y}$ is smooth, we say that $f$ has \textit{potentially good reduction}; elsewise, we say that $f$ has \textit{bad reduction}.

We can view $X^{st}$ as a blow-up of $X_R \times_R R^{st}$. We call the strict transform of the special fiber under this blow-up the \textit{original component}, which we denote $\bar{X}_0$.

\subsection{Ramification on the special fiber}

The action of $G$ on $Y^{st}$ induces an action on $\bar{Y}$. By \cite{raynaud99}, Proposition 2.4.11, the inertia groups of this action at the generic points of $\bar{Y}$ are $p$-groups. If $\bar{V}$ is an irreducible component of $\bar{Y}$, we will denote by $I_{\bar{V}}$ and $D_{\bar{V}}$ the inertia and decomposition groups, respectively, at the generic point of $\bar{V}$.

In $\bar{X}$, the inertia groups at the generic point of an irreducible component $\bar{U}$ are conjugate $p$-groups; if they have order $p^i$, we call $\bar{U}$ a \textit{$p^i$ component}. We say that $\bar{U}$ is \textit{\'etale} when $i=0$ and \textit{inseparable} otherwise. This comes from the fact that, since $\bar{Y}$ is reduced, the inertia arises from an inseparable extension of residue fields at this generic point. By \cite{obus12}, Corollary 2.11, if $\bar{U}$ and $\bar{U}^{\prime}$ are two intersecting components of $\bar{X}$, then either $I_{\bar{U}} \subseteq I_{\bar{U}^{\prime}}$ or $I_{\bar{U}^{\prime}} \subseteq I_{\bar{U}}$ (we stress that this relies crucially on the Sylow $p$-subgroup being cyclic).

We call an irreducible component $\bar{U}$ a \textit{tail} if it intersects the rest of $\bar{X}$ at exactly one point; otherwise, it is an \textit{interior component}. If, in addition, a tail contains the specialization of a branch point, we call this tail \textit{primitive}; otherwise, we say that it is \textit{new}. This follows the convention established, for example, in \cite{raynaud99}. We index the new \'etale tails and primitive \'etale tails by the sets $B_{new}$ and $B_{prim}$, respectively.

Now let $x$ be a point of intersection of two components $\bar{U}$ and $\bar{U}^{\prime}$ with $I_{\bar{U}^{\prime}} \subseteq I_{\bar{U}}$ and let $y$ be a point lying above $x$, the intersection point of two components $\bar{V}$ and $\bar{V}^{\prime}$ lying above $\bar{U}^{\prime}$ and $\bar{U}$, respectively. Following \cite{obus12}, we call the conductor of higher ramification (as in \cite{serre}, for instance) of the extension of complete discrete valuation rings $\hat{\mathcal{O}}_{\bar{U},x}\hookrightarrow \hat{\mathcal{O}}_{\bar{V},y}$ the \textit{effective ramification invariant} at $x$, denoted $\sigma_x$. If $x$ is on a tail $\bar{X}_b$ we will often simply write $\sigma_b$ for $\sigma_x$.

We recall the following, which places some bounds on these invariants; it is originally found in \cite{obus12} in the form of Lemmas 2.20 and 4.2:

\begin{lemma}
\label{bounds}

The effective ramification invariants $\sigma_b$ are positive and lie in $\frac{1}{m_G} \mathbb{Z}$. If $\sigma_b$ is a new tail, we have $\sigma_b \geq 1+\frac{1}{m_G}$

\end{lemma}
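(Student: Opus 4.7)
My plan is to work locally at the intersection point $x$ and study the extension of complete local rings $\hat{\mathcal{O}}_{\bar{U},x} \hookrightarrow \hat{\mathcal{O}}_{\bar{V},y}$ through its higher ramification filtration. Positivity of $\sigma_b$ is immediate: the inertia groups of the action on $\bar{Y}$ are $p$-groups (Raynaud's Proposition 2.4.11, recalled above) and, by the chain property of inertia at an intersection, the extension is genuinely wildly ramified; hence its conductor of higher ramification is strictly positive.

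For the divisibility $\sigma_b \in \frac{1}{m_G}\mathbb{Z}$, I would exploit the tame action of $D_{\bar{U}}/I_{\bar{U}}$ on the cotangent space at $x$. Because the Sylow $p$-subgroup $P$ of $G$ is cyclic, the faithful conjugation action of $N_G(P)/Z_G(P)$ on $P$ has order $m_G$, and this dictates the order of a tame character through which (a subgroup of) $D_{\bar{U}}/I_{\bar{U}}$ acts on a uniformizer at $x$. The upper extension is Galois with $p$-group $I_{\bar{V}}/I_{\bar{V}'}$, so by Hasse-Arf its conductor in lower numbering is an integer. Combining this integrality with the constraint that any defining Artin-Schreier-Witt equation must be equivariant under the tame character forces the upper-numbering conductor of the combined extension into $\frac{1}{m_G}\mathbb{Z}$. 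Concretely, I would pick a parameter on $\bar{U}$ near $x$ on which the tame generator acts by a primitive $m_G$-th root of unity and verify that the permissible pole orders in the Artin-Schreier-Witt representative are controlled by this scaling.

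The lower bound for new tails would come from a stability argument. Since $\bar{X}_b$ carries no specialized branch points, any component $\bar{V}$ of $\bar{Y}$ lying above $\bar{X}_b$ has marked points only from intersections with the remainder of $\bar{Y}$, and there is a single such intersection above $x$ since $\bar{X}_b$ is a tail. For $\bar{Y}$ to be stable---rather than allow contraction of $\bar{V}$---the component $\bar{V}$ must not be a smooth genus-$0$ curve with fewer than three marked points, which forces $\bar{V}$ to have positive genus. Applying Riemann-Hurwitz to $\bar{V} \to \bar{X}_b$ (which, away from the single wild point, is at worst tamely ramified) translates this into a lower bound on the wild different at the unique point above $x$. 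A short calculation expressing this different in terms of $\sigma_b$ and the orders of the relevant inertia groups yields the sharp bound $\sigma_b \geq 1 + \tfrac{1}{m_G}$, with equality in the marginal case.

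The main obstacle I expect is the divisibility claim. Moving between lower- and upper-numbering ramification filtrations, and carefully tracking the tame twist by a character of order $m_G$, is delicate, particularly since $I_{\bar{V}}/I_{\bar{V}'}$ need not be elementary abelian; I would need to invoke Hasse-Arf after identifying the appropriate abelian quotient and then verify that the denominator picked up when passing back to the original extension is exactly $m_G$ and not some proper divisor of it.
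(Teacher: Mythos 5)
The paper does not actually prove this lemma; it quotes it from \cite{obus12} (Lemmas 2.20 and 4.2), and your sketch follows essentially the same route as the proofs given there: positivity from the presence of nontrivial wild inertia at the intersection point, the denominator from the tame part of the decomposition group twisting the wild part, and the new-tail bound from stability forcing positive genus above the tail together with Riemann--Hurwitz. Two small corrections to the middle step. Your appeal to Hasse--Arf is garbled: lower-numbering jumps are integers by definition, while Hasse--Arf concerns \emph{upper}-numbering jumps of abelian extensions; what you actually want is the Herbrand function of the tame subextension of degree $m_b$, which rescales the integral wild jumps by $1/m_b$ in upper numbering. Also, your closing worry points the wrong way: you do not need the tame order to be exactly $m_G$, since any divisor $m_b$ of $m_G$ already gives $\sigma_b \in \frac{1}{m_b}\mathbb{Z} \subseteq \frac{1}{m_G}\mathbb{Z}$; the step that genuinely needs an argument is that $m_b$ \emph{divides} $m_G$ (rather than merely being bounded by it), which uses the cyclicity of the Sylow $p$-subgroup and of the relevant automorphism group. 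Finally, note that the genus computation for a new tail yields only $\sigma_b > 1$; the sharp bound $\sigma_b \geq 1 + \frac{1}{m_G}$ comes from combining that strict inequality with the divisibility statement, not from Riemann--Hurwitz alone.
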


In the case in which $G = \mathbb{Z}/p^s \rtimes \mathbb{Z}/m$ and $f: Y \to E$ is a $G$-cover branched at the points $P_1, \ldots, P_r$, $r > 1$, where $E$ is an elliptic curve having good reduction over $R$, we can be more precise as to the fractional parts of the $\sigma_b$, to be denoted $\langle \sigma_b \rangle$. We can decompose $f$ into an \'etale $\mathbb{Z}/p^n$-cover $Y \to Z$ and a $\mathbb{Z}/m$-cover $g: Z \to E$ given, birationally, by $z^m = f_r \cdot f_u$, where $f_r$ and $f_u$ are rational functions on $E$ corresponding to, respectively, a divisor in the form $\sum_{i=1}^r a_i P_i$ and an $m$-divisible divisor on $E$. The following is a generalization of \cite{wewers03}, Proposition 1.8, originally found in \cite{obus17}, Proposition 3.6. It was originally proven in the case in which the base curve is $\mathbb{P}^1$, but can be proven identically in the case in which the base curve is an elliptic curve $E$.

\begin{prop}
\label{fractionpart}

Let $f: Y \to E$ be a $G = \mathbb{Z}/p^s \rtimes \mathbb{Z}/m$-cover branched in $r \geq 1$ points and let $f^{ss}$ be a fixed semistable model for $f$; let $\bar{X}_b$ be an \'etale tail of $\bar{E}^{ss}$ containing the specialization of a unique branch point $x_i$ and intersecting an inseparable component of $\bar{E}^{ss}$. Then $\langle \sigma_b \rangle = \frac{a_i}{m}$, where the former denotes the fractional part of $\sigma_b$.

\end{prop}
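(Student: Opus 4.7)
The plan is to reduce the problem to the tame subcover $g: Z \to E$ and then to read off the fractional part of the ramification invariant directly from the Kummer equation defining $g$.

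I would first argue that the étale $\mathbb{Z}/p^s$-cover $h: Y \to Z$ contributes only an integer to $\sigma_b$. By \cite{obus12}, the inertia groups in $\bar{X}^{st}$ at the generic points of components meeting along a tail are linearly ordered, and on the étale tail $\bar{X}_b$ the generic inertia is trivial; the wild part of the local extension at $x$ is therefore pulled back entirely from the inseparable component through $g$, while $h$ contributes an unramified residual extension on top. Since the upper-numbering conductor of an everywhere-étale-in-residue-characteristic extension is an integer, $\langle \sigma_b \rangle$ coincides with the fractional part of the effective ramification invariant computed on a suitable semistable model of $g$.

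Having reduced to $g$, I would localize at the specialization of the unique branch point $x_i$ on $\bar{X}_b$. Because the remaining branch points of $g$ specialize elsewhere, the factors $f_j$ for $j \neq i$ are units near $P_i$, and by construction $f_u$ is $m$-divisible; thus the Kummer equation becomes, locally at $P_i$, of the form $z^m = u \cdot t^{a_i}$ with $u$ a unit and $t$ a uniformizer of $E$ at $P_i$. The tail $\bar{X}_b$ corresponds to a valuation of $K(E)$ arising from a blow-up of the good model centered at the specialization of $P_i$, and pulling the equation $z^m = u \cdot t^{a_i}$ back to this valuation yields a tame cyclic cover whose upper-numbering jump at the intersection with the inseparable component is a positive rational congruent to $a_i/m$ modulo $\mathbb{Z}$. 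Combined with the containment $\sigma_b \in \tfrac{1}{m}\mathbb{Z}$ from Lemma \ref{bounds}, this forces $\langle \sigma_b \rangle = a_i/m$.

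The main obstacle is the bookkeeping in the reduction to $g$: one must verify that the component of $\bar{Z}^{ss}$ playing the role of $\bar{V}$ really sees the local Kummer character at $P_i$ and not some Galois conjugate of it, and that the blow-up data producing $\bar{X}_b$ does not shift the conductor by a hidden integer. Both points are controlled by the stability of the model and by the assumption that $\bar{X}_b$ is étale and contains the specialization of a single branch point, which pins down the local monodromy to $t \mapsto \zeta_m^{a_i}$ up to an irrelevant choice of $m$-th root of unity. The argument in \cite{obus17}, Proposition 3.6, is entirely local at the branch point, so it transfers verbatim upon replacing $\mathbb{P}^1$ with $E$; the global geometry of the elliptic curve plays no role in the computation.
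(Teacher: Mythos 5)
Your closing paragraph --- that the argument of \cite{obus17}, Proposition 3.6 is local and carries over verbatim when $\mathbb{P}^1$ is replaced by $E$ --- is all the paper itself offers: it gives no independent proof of Proposition \ref{fractionpart}, only the citation and exactly that remark. So at the level of what the paper actually does, you and it agree.

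The sketch you interpose, however, has the roles of the two subcovers reversed, and this is a genuine gap rather than a bookkeeping issue. The invariant $\sigma_b$ is the conductor of higher ramification at the node $x$ where $\bar{X}_b$ meets the inseparable component; nontrivial higher ramification can only come from the $p$-part, i.e.\ from the generically \'etale $\mathbb{Z}/p^s$-cover $h: Y \to Z$, which degenerates to a wildly ramified cover at the points over $x$. The tame cover $g$ has trivial higher ramification groups, so ``the effective ramification invariant computed on a suitable semistable model of $g$'' is not a meaningful quantity, and $\langle \sigma_b \rangle$ cannot be read off from $g$ alone. The fractional part arises from the Herbrand/upper-numbering division: $\sigma_b = h_b/m_b$, where $h_b \in \mathbb{Z}$ is the conductor of the wild part at $y$ over the intermediate tame cover and $m_b$ is the tame ramification index at $y$ over $x$; the content of the proposition is the congruence $h_b \equiv \pm a_i \pmod{m_b}$ with the correct sign. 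Your local computation $z^m = u\,t^{a_i}$ correctly identifies the exponent of $g$ at the specialization of $x_i$, but that is a different point of $\bar{X}_b$ from the node $x$ where $\sigma_b$ lives. One must first transport the exponent across the tail (the reduction of $g$ over $\bar{X}_b$ is branched only at the specialization of $x_i$ and at $x$, and the local exponents sum to $0$ mod $m_b$ because the divisor of a rational function has degree zero --- this, incidentally, is the one place the genus of the base could conceivably enter, and the reason it does not), and then prove that $h_b$ is congruent to that exponent. That last congruence is precisely what the $\bar{\chi}$-equivariant deformation-datum argument of \cite{wewers03}, Proposition 1.8 and \cite{obus17}, Proposition 3.6 supplies, and it is the step your sketch asserts (``a positive rational congruent to $a_i/m$ modulo $\mathbb{Z}$'') without carrying out.
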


In \cite{raynaud99}, Corollary 3.4.4, we find the \textit{vanishing cycles formula}, which relates the invariants $\sigma_b$ and the genus of $X$ in the case in which $G$ has a Sylow $p$-subgroup of the form $\mathbb{Z}/p$. This formula is generalized to the case in which $G$ has a \textit{cyclic} Sylow $p$-subgroup in \cite{obus12}, Theorem 3.14; we recall this version below:

\begin{theorem} 
\label{vanishingcycles}

Let $f: Y \to X$ be a $G$-Galois cover branched at $r$ points having bad reduction, where $G$ has cyclic Sylow $p$-subgroup; let $g$ be the genus of $X$; let $B_{et}$ be an indexing set for the \'etale tails, each having effective ramification invariant $\sigma_b$. Then we have

\[2g - 2 + r = \sum_{b \in B_{et}} (\sigma_b - 1)\]

\end{theorem}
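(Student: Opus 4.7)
The plan is to compare the arithmetic genus of $\bar{Y}$, which equals $g_Y$ since $\bar{Y}$ is stable, with Riemann-Hurwitz applied to $f$ on the generic fiber. The latter reads
\[2g_Y - 2 = |G|(2g - 2 + r) - \sum_{i=1}^r |G|/e_i,\]
where the $e_i$ are the prime-to-$p$ branching indices.

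First I would stratify $\bar{X}$ using the classification of Section 2.2: its components are \'etale or $p^i$-inseparable for various $i$, and intersecting components have nested inertia groups by Obus' Corollary 2.11. Over an \'etale component, $\bar{f}$ restricts to a disjoint union of \'etale Galois covers by quotients of $G$; over a $p^i$-inseparable component, each preimage component factors as a purely inseparable map of degree $p^i$ followed by an \'etale Galois cover, with $[G:D_{\bar U}]$ conjugate copies lying above $\bar U$. Next, I would express $g_Y$ via the arithmetic genus of the nodal curve $\bar{Y}$, namely $g_Y = \sum_{\bar V} g(\bar V) + \delta - c + 1$, where $\delta$ and $c$ count the nodes and components of $\bar{Y}$. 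Applying Riemann-Hurwitz on each restricted map $\bar V \to \bar U$, the contribution at a node of $\bar Y$ above a node $x$ of $\bar X$ is governed precisely by $\sigma_x$, through its interpretation as the conductor of higher ramification of $\hat{\mathcal{O}}_{\bar U,x} \hookrightarrow \hat{\mathcal{O}}_{\bar V,y}$.

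Finally I would telescope along the dual graph of $\bar X$, starting from the original component and moving outward. At an interior edge joining two inseparable components, the local contributions from the two sides cancel because the nesting of inertia aligns the purely inseparable extensions there. At a primitive tail, the $\sigma_x$ term matches the branch-point contribution $|G|/e_i$ from generic-fiber Riemann-Hurwitz and again cancels. What survives is exactly one term for each \'etale tail $b \in B_{et}$, contributing $\sigma_b - 1$; dividing through by $|G|$ yields the stated formula.

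The main obstacle will be the bookkeeping of this telescoping: at each node one must combine a purely inseparable extension on one side, an \'etale or wildly ramified extension on the other, and the multiplicity of conjugate preimage components, and verify that all intermediate contributions cancel cleanly. It is here that the cyclic Sylow $p$-hypothesis enters essentially, via the nesting of inertia from Obus' Corollary 2.11; without that nesting the contributions from distinct branches at a node would not match up, and the formula would fail in general.
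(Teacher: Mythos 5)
The paper does not actually prove this statement; it imports it verbatim from \cite{obus12}, Theorem 3.14 (generalizing \cite{raynaud99}, Corollaire 3.4.4). Your outline follows the architecture of that proof --- compare $2g_Y-2$ computed from the arithmetic genus of $\bar Y$ with Riemann--Hurwitz on the generic fiber and let the interior contributions cancel --- but the step you defer as ``bookkeeping'' is the entire mathematical content, and the tools you name are not sufficient to carry it out. Over an inseparable component $\bar U$ the map $\bar V\to\bar U$ has inseparable degree $p^i$, and its separable part is a cover of $\bar U$ whose ramification at the nodes is wild with \emph{a priori unknown} conductors; Riemann--Hurwitz therefore does not express $g(\bar V)$ in terms of $g(\bar U)$ and the $\sigma_x$ alone. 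What supplies the missing local data is the different of $f^{st}$ along the vertical components (equivalently, the deformation data attached to the successive $\mathbb{Z}/p$-quotients of the inertia), together with a local-global compatibility stating that the divisor of each deformation datum on $\bar V$ has the degree forced by $2g(\bar V)-2$ and that its orders at the nodes are governed by the $\sigma_x$. Proving that compatibility --- and hence that the interior-edge contributions actually telescope --- is the substance of Raynaud's \S 3.4 and of Obus's generalization; the nesting of inertia groups from \cite{obus12}, Corollary 2.11 is an input to that argument, but it does not by itself ``align the purely inseparable extensions,'' and in the cyclic (non-prime-order) case one needs Obus's effective different computations for the whole tower, which is a genuinely new ingredient your sketch does not touch.

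There is also an internal inconsistency in your final accounting: you claim both that at a primitive tail the $\sigma_x$-term cancels against the branch-point term $|G|/e_i$ and that every \'etale tail survives contributing $\sigma_b-1$. These cannot both hold. In the form of the formula used in the literature (and by this paper itself immediately after the theorem, in the elliptic-curve case) new tails contribute $\sigma_b-1$ while primitive tails contribute $\sigma_b$, i.e. $2g-2+r=\sum_{b\in B_{new}}(\sigma_b-1)+\sum_{b\in B_{prim}}\sigma_b$; the branch point at a primitive tail absorbs only the $-1$, not the whole $\sigma_b$. You should fix the node-by-node ledger so that it distinguishes the two kinds of tails, and check which normalization of the statement you are actually proving.
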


In particular, when $f$ is an elliptic curve, we have 

\[r = \sum_{b \in B_{new}} (\sigma_b - 1) + \sum_{b \in B_{prim}} \sigma_b\]

\section{Reduction types}

We maintain the assumptions of \S 2; namely, that $f: Y \to E$ is a $G$-cover branched at $r \geq 1$ points with prime-to-$p$ branching indices, $G$ having a cyclic Sylow $p$-subgroup, and $Y$ having genus at least 2. Now we will consider the torsor structure of the reduction of one-point covers $f: Y \to X = E$. These will eventually play a key part in the proof of the main theorem.

\subsection{Multiplicative reduction}

Supposing that $f$ has bad reduction, we fix a semistable model $f^{ss}: Y^{ss} \to E^{ss}$. Let $\bar{U}$ be an inseparable component of $\bar{E}^{ss}$ and let $\bar{V}$ be a component of $\bar{Y}^{ss}$ lying above $\bar{W}$ having decomposition and inertia group $D_{\bar{V}}$ and $I_{\bar{V}}$, respectively. The group $D_{\bar{V}}$ has a normal subgroup of order $p$, since $I_{\bar{V}}$ is normal in $D_{\bar{V}}$ and $I_{\bar{V}}$ is cyclic. Then \cite{obus12}, Corollary 2.4, implies that $D_{\bar{V}}$ has a maximal normal prime-to-$p$ subgroup with $D_{\bar{V}}/N \cong \mathbb{Z}/p^j \rtimes \mathbb{Z}/m_{D_{\bar{V}}}$ for some $j>0$; we denote by $P$ the Sylow $p$-subgroup of $D_{\bar{V}}/N$.

Let $\eta$ be the generic point of $\bar{V}/N$. The group $P$ acts on $\hat{\mathcal{O}}_{Y_R/N, \eta}$; we denote this ring by $B$ and its subring fixed by $P$ by $A$. The action of $P$ on the residue field of $B$ is trivial, so that $B$ is a totally ramified extension of $A$. We say that $f$ has \textit{multiplicative reduction} when $B/A$ has the structure of a $\mu_{p^j}$-torsor.

Following \cite{obus17}, we have an equivalent formulation which will be of use to us. We say that $B/A$, as above, is \textit{of $\mu_{p^j}$-type} if $\text{Frac}(B)$ is a Kummer extension of $\text{Frac}(A)$ given by the adjunction of a ${p^j}^{th}$ root of a unit in $B$ that does not reduce to a $p^{th}$ power in the residue field of $A$. Similarly, we say that $B/A$ is \textit{potentially of $\mu_{p^j}$-type} if the base-change by some field, $K^{\prime}$, is of $\mu_{p^j}$-type. The former is equivalent to $B/A$ having the structure of a $\mu_{p^j}$-torsor.

Now we assume that $G$ is of the form $\mathbb{Z}/p^s \rtimes \mathbb{Z}/m$ and is branched at the points $P_1, \ldots, P_r$, $r>1$. As we saw in \S 2, we can decompose $f$ into an \'etale $\mathbb{Z}/p^s$-cover $Y \to Z$ and a branched $\mathbb{Z}/m$-cover, $Z \to E$.

Let $\chi: \mathbb{Z}/m \to \mu_m(K)$ be a character of order $m$. We have the reduction $\bar{\chi}$ of $\chi$, which gives a map $\mathbb{Z} \to \mathbb{F}_p^{\times}$. We then may use $\bar{\chi}$ to obtain a semidirect product, $\mathbb{Z}/p \rtimes \mathbb{Z}/m$. The $p$-cyclic cover above $Z$ is \'etale, so corresponds to a nontrivial class in $H^1_{et}(Z, \mathbb{Z}/p)_{\chi}$. Letting $J_Z$ denote the Jacobian of $Z$, we have a canonical isomorphism $H^1_{et}(Z, \mathbb{Z}/p)_{\chi} \cong J_Z[p]_{\chi}(-1)$. Moreover, we have that $J_Z[p]_{\chi}(-1) = \text{Hom}_{\mathbb{F}_p}(\mu_p(\bar(K)), J_Z[p]_{\chi})$. Choosing a $p^{th}$ root of unity in a fixed algebraic closure $\bar{K}$ of $K$, we identify $J_Z[p]_{\chi}(-1)$ with $J_Z[p]_{\chi}$. We have the following result concerning the structure of the Jacobian, $\text{Jac}(\bar{Z})[p]_{\bar{\chi}}$, of the special fiber $\bar{Z}$, which we denote $\bar{J}[p]_{\bar{\chi}}$:

\begin{prop} 
\label{jacobian}

Let $\bar{g}: \bar{Z} \to \bar{E}$ be an $m$-cyclic cover given, birationally, by the equation $z^m = f_r \cdot f_u$, where $f_r$ and $f_u$ are rational functions on $E$ corresponding to, respectively, a divisor in the form $\sum_{i=1}^r a_i P_i$ and an $m$-divisible divisor on $E$, $\sum_{j=1}^{\rho} m b_j Q_j$ with branch locus $\left\{P_1, \ldots, P_r \right\}$, $r \geq 2$, satisfying $\sum a_i = m$. Then $\bar{J}[p]_{\bar{\chi}} \cong \big( \mathbb{Z}/p \big)^{r-1} \times \mu_p$.

\end{prop}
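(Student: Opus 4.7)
The argument proceeds in direct analogy with the proof of \cite{wewers03}, Proposition 1.8, where the base curve is $\mathbb{P}^1$: one computes the finite flat group scheme $\bar{J}[p]_{\bar\chi}$ by separately identifying its étale, multiplicative, and local-local parts. The only novelty is that $\bar{E}$ has positive genus, so the ambient Riemann--Roch computation contributes one additional degree of freedom, accounting for the shift from an étale rank of $r-2$ in the $\mathbb{P}^1$ case to the étale rank of $r-1$ claimed here. Since $\gcd(m,p)=1$, the group ring $\mathbb{F}_p[\mathbb{Z}/m]$ is semisimple and the induced $\mathbb{Z}/m$-action on $\bar{J}[p]$ respects both the connected-étale exact sequence and Cartier duality; in particular $\bar{J}[p]_{\bar\chi}$ splits canonically as $\bar{J}[p]^{\text{et}}_{\bar\chi}\oplus\bar{J}[p]^{\text{mult}}_{\bar\chi}\oplus\bar{J}[p]^{\text{ll}}_{\bar\chi}$, with Cartier dual $\bar{J}[p]_{\bar\chi^{-1}}$.

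I would first compute $\bar{J}[p]^{\text{et}}_{\bar\chi}$ via the identification with the $\bar\chi$-isotypic part of $H^1_{\text{et}}(\bar{Z},\mathbb{F}_p)$. The Artin--Schreier sequence produces representatives of the form $z^j h$, with $h\in\bar{k}(\bar{E})$ and the exponent $j$ determined by $\bar\chi$; requiring the resulting $\mathbb{Z}/p$-cover to be everywhere unramified controls the pole orders of $h$ at the $P_i$ and the $Q_j$ in terms of the $a_i$. Using $\sum a_i = m$ and the $m$-divisibility of the polar divisor of $f_u$, a Riemann--Roch computation on $\bar{E}$ then cuts out an $(r-1)$-dimensional space of admissible $h$, yielding $\bar{J}[p]^{\text{et}}_{\bar\chi}\cong(\mathbb{Z}/p)^{r-1}$. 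Running the same calculation with $\bar\chi^{-1}$ in place of $\bar\chi$ produces, by the asymmetry of the fractional-parts condition under $\sum a_i = m$, an étale piece of rank exactly one.

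For the multiplicative part I would exhibit an explicit Kummer class obtained by adjoining a $p$-th root of a suitable $\bar\chi$-eigenvector in $\bar{k}(\bar{Z})^\times$; Cartier-dualizing against $\bar{J}[p]^{\text{et}}_{\bar\chi^{-1}}$ (which by the previous step has rank one) then forces $\bar{J}[p]^{\text{mult}}_{\bar\chi}\cong\mu_p$. Finally, matching the resulting total order $p^r$ against the overall $\bar{k}$-rank of $\bar{J}[p]_{\bar\chi}$, computed from the Chevalley--Weil formula for $\dim H^0(\bar{Z},\Omega^1)_{\bar\chi}$, rules out any local-local contribution.

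The main obstacle is the Riemann--Roch count in the étale step: one must combine the eigenvector condition, the everywhere-étaleness constraint on $\bar{Z}$ (not merely on $\bar{E}$), and the hypothesis $\sum a_i = m$ so as to produce precisely an $(r-1)$-dimensional subspace of $\bar{k}(\bar{Z})/\wp(\bar{k}(\bar{Z}))$, and then to separate the étale and multiplicative pieces cleanly by exploiting the asymmetry between the roles of $\bar\chi$ and $\bar\chi^{-1}$ in the fractional-parts bookkeeping.
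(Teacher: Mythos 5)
Your structural picture is right (the \'etale--multiplicative--local-local decomposition respects the $\mathbb{Z}/m$-action, Cartier duality exchanges the $\bar\chi$- and $\bar\chi^{-1}$-parts, the total order is $p^r$, and the answer differs from the $\mathbb{P}^1$ case by one in the \'etale rank), but the central computational step does not go through as described. You propose to compute $\bar{J}[p]^{\text{et}}_{\bar\chi}$ by an Artin--Schreier/Riemann--Roch count of admissible functions $z^jh$. The obstruction is that $\wp(z^jh)=z^{pj}h^p-z^jh$ does not preserve the $\bar\chi$-eigenspace (it mixes the $\bar\chi^{j}$- and $\bar\chi^{pj}$-isotypic pieces), so the unramified Artin--Schreier classes in a given eigenspace are cut out by a Frobenius-semilinear condition, not by a linear one that Riemann--Roch can count. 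Equivalently, the $p$-rank of an isotypic piece of the Jacobian of a prime-to-$p$ cyclic cover is governed by the invertibility of the Hasse--Witt/Cartier operator on the relevant orbit of eigenspaces; it is not a function of the branch data $(a_i)$ alone in general, so no dimension count of the type you describe can output ``exactly $r-1$'' directly. This also undermines the final step: using the total order $p^r$ to exclude a local-local factor only works if the \'etale and multiplicative ranks have already been shown to be exactly $r-1$ and $1$, which is precisely what the Artin--Schreier and Kummer constructions fail to deliver on their own.

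The paper's proof inverts your logic and thereby avoids the semilinearity issue entirely. It decomposes $\bar{g}_*\mathcal{O}_{\bar Z}$ into isotypic line bundles and computes $\deg\mathcal{L}_{\bar\chi}=-1$ and $\deg\mathcal{L}_{\bar\chi^{-1}}=1-r$ from the divisor of $z$ (here the hypothesis $\sum a_i=m$ enters), whence $\dim H^1(\bar Z,\mathcal{O}_{\bar Z})_{\bar\chi}=1$ and $\dim H^1(\bar Z,\mathcal{O}_{\bar Z})_{\bar\chi^{-1}}=r-1$ by Riemann--Roch on the elliptic curve; the extra unit of $h^1$ relative to $\mathbb{P}^1$ is exactly the genus contribution you anticipated. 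The de Rham dimension then gives the group-scheme order $p^r$, and the identification $\mathrm{Lie}(J[p])\cong H^1(\bar Z,\mathcal{O}_{\bar Z})$ shows that the $\bar\chi$-part has a one-dimensional Lie algebra, leaving room for only a single non-\'etale factor $G\in\{\mu_p,\alpha_p\}$; finally, Cartier duality with the $\bar\chi^{-1}$-part, whose Lie algebra has dimension $r-1$, rules out $\alpha_p$. If you want to salvage your plan, replace the Artin--Schreier count by this coherent-cohomology computation: the \'etale rank is a \emph{consequence} of the Lie-algebra bound and duality, not an input one can establish first.
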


\begin{proof} 

The action of $\mathbb{Z}/m$ on $\bar{g}_*\mathcal{O}_{\bar{Z}}$ arising from that on $\bar{Z}$ gives us the decomposition
\[ \bar{g}_*\mathcal{O}_{\bar{Z}} = \bigoplus \mathcal{L}_{\bar{\psi}}, \] 
where $\bar{\psi}$ runs through all characters $\mathbb{Z} \to k^{\times}$, into isotypical line bundles.

The rational function $z$ will have zeros of order $b_j$ at each $Q_j$ and a pole of order $\frac{1}{m} \big( \sum a_i + m \sum b_j \big) = 1+\sum b_j$ at $\infty$. This function is a rational section of $\bar{\chi}$, so we get $\text{deg}(\mathcal{L}_{\bar{\chi}}) = \sum b_j - (1+\sum b_j) = -1$. Similarly, $z^{m-1}$ has zeros of order $a_i - 1$ at each $\bar{x}_i$, zeros of order $b_j(m-1)$ at each $P_j$ and a pole of order $(1+\sum b_j)(m-1)$ at $\infty$, so that $\text{deg}(\mathcal{L}_{\bar{\chi}^{-1}}) = (m-r) + (m-1)\sum b_j - (1+\sum b_j)(m-1) = 1-r$. So $H^1(\bar{Z}, \mathcal{O}_{\bar{Z}})_{\bar{\chi}}$ and $H^1(\bar{Z}, \mathcal{O}_{\bar{Z}})_{\bar{\chi}^{-1}}$ have respective dimensions 1 and $r-1$.

Combining \cite{oda}, Corollary 5.11, and \cite{wed}, Theorem 2.1, we may compute the rank of $\bar{J}[p]_{\bar{\chi}}$ as a group scheme as 
\[\dim_k(H^1_{\text{dR}}(\bar{Z})_{\bar{\chi}}) = \dim_k(H^1(\bar{Z}, \mathcal{O}_{\bar{Z}})_{\bar{\chi}}) + \dim_k(H^0(\bar{Z}, \Omega_{\bar{Z}/k})_{\bar{\chi}}) = r\]
since $H^0(\bar{Z}, \Omega_{\bar{Z}/k})_{\bar{\chi}}$ is dual to $H^1(\bar{Z}, \mathcal{O}_{\bar{Z}})_{\bar{\chi}^{-1}}$.

There is a canonical $k$-linear isomorphism $\text{Lie}(J[p]) \cong H^1(\bar{Z}, \mathcal{O}_{\bar{Z}})$; it is compatible with the $\mathbb{Z}/m$ action, so we also have an isomorphism of $\bar{\chi}$-eigenspaces. So $\text{Lie}(J[p]_{\bar{\chi}})$ has dimension $1$ and $J[p]_{\bar{\chi}}$ is isomorphic to $\big(\mathbb{Z}/p \big)^{r-1} \times G$, where $G$ is either $\mu_p$ or $\alpha_p$. Since $\bar{J}[p]_{\bar{\chi}^{-1}}$ is dual to $\bar{J}[p]_{\bar{\chi}}$, we also have that $\bar{J}[p]_{\bar{\chi}^{-1}}$ is isomorphic to $\big(\mu_p \big)^{r-1} \times G^{\prime}$, where $G^{\prime}$ will be dual to $G$. Since $\text{Lie}(J[p])_{\bar{\chi}^{-1}}$ has dimension $r-1$, we must have $G^{\prime} = \mathbb{Z}/p$, and so $\bar{J}[p]_{\bar{\chi}} = \big( \mathbb{Z}/p \big)^{r-1} \times \mu_p$, as we wanted.

\end{proof}

We say that $f: Y \to E$ is of \textit{multiplicative type} when $\sum_{i=1}^r a_i = m$, as is the case in the previous proposition. The following is analogous to \cite{obus17}, Proposition 3.7 and gives the reasoning behind the terminology \textit{multiplicative type}:

\begin{prop} 
\label{multtype}

Let $f: Y \to X = E$ be a $G = \mathbb{Z}/p^s \rtimes \mathbb{Z}/m$-Galois cover of multiplicative type. Then any semistable model $f^{ss}: Y^{ss} \to X^{ss}$ of $f$ has multiplicative reduction above every inseparable component of $\bar{X}^{ss}$.

\end{prop}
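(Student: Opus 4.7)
The plan is to reduce to controlling a single $\mathbb{Z}/p$-layer of the tower $Y \to Z$ and to locate its class in the $\mu_p$ summand of the reduced Jacobian via Proposition \ref{jacobian}.

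First, fix an inseparable component $\bar{U}$ of $\bar{X}^{ss}$, a component $\bar{V}$ above it, and let $P \cong \mathbb{Z}/p^j$ denote the Sylow $p$-subgroup of $D_{\bar{V}}/N$. Decompose the extension $B/A$ into a tower of $\mathbb{Z}/p$-extensions along the subgroup chain $0 \subset p^{j-1}\mathbb{Z}/p^j \subset \cdots \subset \mathbb{Z}/p^j$. If each successive layer is (potentially) of $\mu_p$-type, then after a suitable base change containing the required roots of unity, the Kummer generators compose into a single $p^j$-th root of a unit in $B$ that does not reduce to a $p$-th power, endowing $B/A$ with the structure of a $\mu_{p^j}$-torsor. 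Thus it suffices to establish the $\mu_p$-torsor property for each $\mathbb{Z}/p$-layer of $Y \to Z$ above every inseparable component.

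Second, fix a $\mathbb{Z}/p$-layer $Y' \to Z'$, an \'etale $\mathbb{Z}/p$-cover equivariant for the residual $\mathbb{Z}/m$-action with character $\chi$. It corresponds to a nonzero class $\xi \in H^1_{et}(Z', \mathbb{Z}/p)_\chi \cong J_{Z'}[p]_\chi$, specializing via the N\'eron model of $J_{Z'}$ to $\bar{\xi} \in \bar{J}_{Z'}[p]_{\bar{\chi}} \cong (\mathbb{Z}/p)^{r-1} \times \mu_p$; the last isomorphism comes from Proposition \ref{jacobian}, applicable since $f$ (and hence $Y' \to E$) is of multiplicative type. Classes confined to the \'etale summand $(\mathbb{Z}/p)^{r-1}$ correspond to $\mathbb{Z}/p$-covers that extend \'etalely over a semistable model of $Z'$; a nonzero $\mu_p$-component produces multiplicative reduction above inseparable components of the base.

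The main obstacle is showing that the $\mu_p$-component of $\bar{\xi}$ is nonzero. I would argue by contradiction: if each $\mathbb{Z}/p$-layer of $Y \to Z$ had $\bar{\xi}$ confined to the \'etale summand, then the whole tower $Y \to Z$ would extend \'etalely over a semistable model of $Z$. Combined with the tameness of $Z \to E$, this would force the composite $Y \to E$ to have at worst tame ramification on any semistable model, contradicting the presence of the inseparable component $\bar{U}$. As a cross-check one can invoke Proposition \ref{fractionpart}, whose prescribed fractional parts $a_i/m$ of the ramification invariants are exactly those produced by a $\mu_p$-type Kummer extension (and not by an \'etale class). Once $\bar{\xi}$ is known to have nonzero $\mu_p$-component, the local extension $B/A$ above $\bar{U}$ inherits a $\mu_p$-torsor structure, and the tower reduction of the first step then yields the required $\mu_{p^j}$-torsor structure.
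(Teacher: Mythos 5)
Your overall strategy --- split $B/A$ into $\mathbb{Z}/p$-layers and control each layer with Proposition \ref{jacobian} --- develops two genuine gaps. First, Proposition \ref{jacobian} computes $\mathrm{Jac}(\bar Z)[p]_{\bar\chi}$ only for the degree-$m$ cyclic cover $Z \to E$; it says nothing about the $p$-torsion of the Jacobians of the intermediate curves $Z'$ in the tower $Y \to Z$, which are $\mathbb{Z}/p^i \rtimes \mathbb{Z}/m$-covers of $E$. So your plan to verify the $\mu_p$-property ``for each $\mathbb{Z}/p$-layer'' has no input for any layer above the bottom one. The paper's proof avoids this entirely: it applies the Jacobian argument only to the bottom $\mathbb{Z}/p \rtimes \mathbb{Z}/m$-quotient $h$ (which is again of multiplicative type), deduces that every component is a $p^s$-component, and then climbs the whole tower in one step by citing \cite{obus17}, Lemma 2.2, which crucially uses the algebraically closed residue field. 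Your assertion that $\mu_p$-type Kummer generators at each layer ``compose into a single $p^j$-th root of a unit'' is essentially a restatement of that lemma rather than a proof of it, and even granting it you would still lack the layer-by-layer input.

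Second, the dichotomy you set up (\'etale summand versus $\mu_p$ summand) is not the operative one. Above an inseparable component the torsor is radicial by definition, so \'etale reduction is excluded from the outset, and your contradiction argument only re-establishes non-\'etaleness at one component rather than multiplicativity at every inseparable component. The actual content of the multiplicative-type hypothesis, via Proposition \ref{jacobian}, is that $\bar{J}[p]_{\bar{\chi}} \cong (\mathbb{Z}/p)^{r-1} \times \mu_p$ has no $\alpha_p$-part; it is the exclusion of additive degeneration that forces the radicial reduction to be multiplicative, and this is precisely what the citation to \cite{raynaud90}, p.~190, supplies in the paper. Relatedly, ``the $\mu_p$-component of $\bar{\xi}$ is nonzero'' does not quite make sense as stated: $\mu_p(\bar{k})$ is trivial, so the specialization of $\xi$ on $\bar{k}$-points always lies in the \'etale summand; multiplicativity must be read off from the schematic closure of the subgroup generated by $\xi$ in the N\'eron model, not from the reduced point.
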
 

\begin{proof}

If $s=1$, then by Proposition \ref{jacobian} and \cite{raynaud90}, p. 190, the $p$-cyclic cover $Y \to Z$ reduces to a $\mu_p$-cover above the generic point of each component, and so will have multiplicative reduction above every component.

If $s>1$, then $f$ has a quotient $\mathbb{Z}/p \rtimes \mathbb{Z}/m$-cover, $h$, which will also be of multiplicative type, so that any semistable model of $h$ will have multiplicative reduction over every component of $\bar{X}^{ss}$. In particular, every component of $\bar{X}^{ss}$ will be a $p^s$-component for $f$. Let $K^{ss}$ be a field of definition for $f^{ss}$, a semistable model for $f$; since $K$ has algebraically closed residue field, \cite{obus17}, Lemma 2.2 implies that $f^{ss}$ has multiplicative reduction over all of $\bar{X}$, as we wanted.

\end{proof}

\subsection{The auxiliary cover} Raynaud's proof of potentially good reduction in \cite{raynaud99} the case of a $G$-cover having a $\mathbb{Z}/p$ Sylow $p$-subgroup relies on the construction of the \textit{auxiliary cover}. Obus (\cite{obus10}) generalized this construction to the case of a \textit{cyclic} Sylow $p$-subgroup, whose properties we recall here (cf. \cite{raynaud99}, \cite{obus10}):

\begin{const}
\label{auxcover}

Over some finite extension $K^{\prime}$ of $K$, we construct the \textit{auxiliary cover} of $f$, a $G^{aux}$-Galois cover $f^{aux}: Y^{aux} \to X^{aux}$, with $G^{aux} \leq G$, having the following properties:

\begin{enumerate}

\item[(i)] $(X^{aux})^{ss} = X^{st}$ and $\bar{X}^{aux} = \bar{X}$.

\item[(ii)] There is an \'etale neighborhood $Z$ of the union of the inseparable components of $\bar{X}$ with the property that, as covers, $f^{st} \times_{X^{st}} Z \cong \big(Ind_{G^{aux}}^G (f^{aux})^{st} \big)\times_{X^{st}} Z$.

\item[(iii)] The branch locus of $f^{aux}$ consists of a branch point $x_b$ of index $m_b$ for each \'etale tail $\bar{X}_b$ of $\bar{X}$ for which $m_b > 1$. If $\bar{X}_b$ is primitive, so that it contains a branch point of $f$, $x_b$ is this corresponding branch point. If $\bar{X}_b$ is new, $x_b$ specializes to a smooth point.

\item[(iv)] Given an \'etale tail $\bar{X}_b$ of $\bar{X}$ and an irreducible component $\bar{V}_b$ of $\bar{Y}^{aux}$ above $\bar{X}_b$ with effective ramification invariant $\sigma_b^{aux}$, we have $\sigma_b^{aux} = \sigma_b$.

\item[(v)] If $N$ is the maximal prime-to-$p$ normal subgroup of $G^{aux}$, then $G^{aux}/N \cong \mathbb{Z}/p^s \rtimes \mathbb{Z}/m_{G^{aux}}$ with $s \geq 1$.

\end{enumerate}

\end{const}

We denote by $G^{st}$ the quotient in (v); we then call the $G^{str}$-quotient cover of $f^{aux}$ the \textit{strong auxiliary cover}, denoted $f^{str}: Y^{str} \to X^{str}$. The upper-numbered filtration for the  higher ramification groups is unaffected by taking quotients, so both the auxiliary cover and the strong auxiliary cover will have the same effective ramification invariants as the original cover. By passing to either cover, we allow ourselves the convenience of studying the ramification with a simplified Galois group at the expense of a potentially larger branch locus. 

Strong auxiliary covers arising from one-point covers have the following property:

\begin{prop}
\label{auxtype}

If the strong auxiliary cover of a one-point cover with prime-to-$p$ branching ramifies, it is of multiplicative type.

\end{prop}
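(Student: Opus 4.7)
The plan is to exploit the canonical decomposition of $f^{str}$: since by Construction \ref{auxcover}(v) its Galois group is $G^{st} = \mathbb{Z}/p^s \rtimes \mathbb{Z}/m$ with $s \geq 1$, $f^{str}$ factors as an \'etale $\mathbb{Z}/p^s$-cover $Y^{str} \to Z$ over a tame $\mathbb{Z}/m$-cover $g: Z \to E$ given birationally by $z^m = f_r f_u$, where $f_r$ corresponds to a divisor $\sum_i a_i P_i$ supported on the branch points (prime-to-$p$ branching ensures these coincide with the branch points of $f^{str}$). The degree-zero requirement on $E$ together with the $m$-divisibility of the $f_u$-part forces $\sum a_i$ to be a nonnegative multiple of $m$, so the real task is to rule out $\sum a_i = 0$ and $\sum a_i \geq 2m$.

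To compute the $a_b$ individually I would apply Proposition \ref{fractionpart} to each \'etale tail $\bar{X}_b$ corresponding to a branch point (that is, each tail with $m_b > 1$, by Construction \ref{auxcover}(iii)), using that $\sigma_b^{str} = \sigma_b$ via Construction \ref{auxcover}(iv), to obtain $\langle \sigma_b \rangle = a_b/m$. Writing $\sigma_b = n_b + a_b/m$ with $n_b = \lfloor \sigma_b \rfloor$, the vanishing cycles formula (Theorem \ref{vanishingcycles}) applied to $f$ itself (with $g = 1$, $r = 1$) reads
$$1 = \sum_{b \in B_{et}}(\sigma_b - 1) = \sum_b n_b - |B_{et}| + \frac{1}{m}\sum_b a_b,$$
so $\sum_b a_b = m(1 + |B_{et}| - \sum_b n_b)$. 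The target $\sum a_b = m$ thus amounts to $\sum_b n_b = |B_{et}|$.

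By Lemma \ref{bounds}, $n_b \geq 1$ for each new tail; since $f$ is a one-point cover, it has at most one primitive tail, so $\sum_b n_b \geq |B_{new}| \geq |B_{et}| - 1$ and hence $\sum a_b \leq 2m$. If there is no primitive tail, $\sum n_b \geq |B_{et}|$ already forces $\sum a_b \leq m$, and combined with $\sum a_b > 0$ (which holds because $f^{str}$ ramifies) we conclude $\sum a_b = m$.

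The principal obstacle is the case of a primitive tail $b^*$ satisfying $\sigma_{b^*} < 1$ (equivalently $n_{b^*} = 0$), which is the only remaining way to realize the unwanted value $\sum a_b = 2m$. I expect to exclude this via a local analysis at $b^*$: since the unique branch point of $f$ has prime-to-$p$ ramification index $e$, one can read off $m_{b^*}$ and directly bound the conductor of higher ramification at the intersection of $\bar{X}_{b^*}$ with its neighboring inseparable component, yielding $\sigma_{b^*} \geq 1$. Establishing this sharper local bound is the main technical step and is expected to be analogous in spirit to the argument used in \cite{obus17} for three-point $\mathbb{P}^1$-covers, where the vanishing cycles input is numerically identical (both setups give $2g - 2 + r = 1$).
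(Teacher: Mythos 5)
Your overall strategy---decompose $f^{str}$ through its $\mathbb{Z}/m$-quotient, identify $\langle\sigma_b\rangle = a_b/m$ on each tail via Proposition \ref{fractionpart}, and feed this into the vanishing cycles formula---is the same as the paper's, but the proof does not close, and the step you defer as ``the main technical step'' is not a refinement you can supply: it is false. In the regime you cannot exclude (at least two \'etale tails, one of them primitive), the primitive tail in fact always has $\sigma_{b^*}\in(0,1)$, so no local analysis will produce $\sigma_{b^*}\geq 1$. The source of the trouble is the form of the vanishing cycles formula you use. The correct statement (the one in \cite{obus12}, and the ``in particular'' display following Theorem \ref{vanishingcycles}, with which the general display in that theorem is not even self-consistent) weights a primitive tail by $\sigma_b$ rather than by $\sigma_b-1$; for a one-point cover of an elliptic curve it reads
\[ 1 \;=\; \sum_{b\in B_{new}}(\sigma_b-1)\;+\;\sum_{b\in B_{prim}}\sigma_b. \]
As a sanity check, in the classical three-point $\mathbb{Z}/p\rtimes\mathbb{Z}/m$ case with three primitive tails, $\sigma_{b_i}=a_i/m$, $\sum a_i=m$, and no new tails, this weighting gives $1$ on the right side, whereas yours gives $-2$.

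With the correct formula every summand is strictly positive by Lemma \ref{bounds}, so if there are at least two \'etale tails each summand lies in $(0,1)$; hence every $\sigma_b$ is a non-integer, every \'etale tail has $m_b>1$ and therefore carries a branch point of $f^{str}$ by Construction \ref{auxcover}(iii), each summand equals $\langle\sigma_b\rangle = a_b/m$, and $\sum a_b=m$ follows at once---your problematic value $\sum a_b=2m$ simply never arises, and there is no residual case to exclude. The only case needing separate treatment is that of a single \'etale tail, where the formula forces $\sigma_b$ to equal $1$ (primitive) or $2$ (new), hence to be an integer, hence $m_b=1$ and $f^{str}$ unramified, contradicting the hypothesis. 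Your subsidiary observations---at most one primitive tail, $\lfloor\sigma_b\rfloor\geq 1$ for new tails, $m\mid\sum a_b$, and $\sum a_b>0$---are all correct, but they cannot compensate for the sign error on the primitive tail's contribution.
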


\begin{proof}

Let $\bar{X}_b$ be an \'etale tail of $\bar{X}^{str}$ with effective ramification invariant $\sigma_b$. We have seen that passing to the strong auxiliary cover preserves effective ramification invariants, so by Proposition \ref{fractionpart} and Theorem \ref{vanishingcycles}, $\sum \langle \sigma_b \rangle = 1$. If there were only one \'etale tail, this would mean that $\sigma_b = 1$ and $f^{str}$ would be unramified, a contradiction. So we assume there are at least two \'etale tails. Then by Lemma \ref{bounds}, each $\sigma_b$ is a non-integer, so that each \'etale tail has $m_b > 1$. So, applying Proposition \ref{fractionpart}, we see that $\sum \langle \sigma_b \rangle = \sum \frac{a_i}{m} = 1$, so that $\sum a_1 = m$ and $f^{str}$ is of multiplicative type.

\end{proof}

\begin{corollary}
\label{multiplicativereduction}

Let $f: Y \to E$ be a branched one-point cover with bad reduction and prime-to-$p$ branching indices. Then the stable model of $f$ has multiplicative reduction over the original component.

\end{corollary}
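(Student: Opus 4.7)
My plan is to pass to the strong auxiliary cover $f^{str}$ of $f$, exploit Propositions \ref{auxtype} and \ref{multtype} to obtain multiplicative reduction over inseparable components there, verify that the original component $\bar{X}_0$ is itself inseparable, and then transfer the conclusion back to $f^{st}$ using property (ii) of Construction \ref{auxcover}.

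First I form $f^{str}$ using Construction \ref{auxcover} and check that it ramifies. Since $f$ has bad reduction, Theorem \ref{vanishingcycles} with $g = 1$ and $r = 1$ gives $\sum_{b \in B_{et}}(\sigma_b - 1) = 1$, so $B_{et}$ is nonempty. Take an \'etale tail $\bar{X}_b$ meeting an inseparable component; at least one such tail exists because not every component of $\bar{X}^{st}$ can be \'etale when bad reduction holds. By Proposition \ref{fractionpart}, $\langle \sigma_b \rangle = a_i/m$ with $0 < a_i < m$, so $\sigma_b \notin \mathbb{Z}$, forcing $m_b > 1$, and thus by property (iii) of Construction \ref{auxcover}, $f^{aux}$ (and hence $f^{str}$) has a branch point at the associated specialization. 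So $f^{str}$ ramifies, whence by Proposition \ref{auxtype} it is of multiplicative type, and then Proposition \ref{multtype} implies that any semistable model of $f^{str}$ has multiplicative reduction over every inseparable component of $\bar{X}^{str}$.

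Next I must verify that the original component $\bar{X}_0$ is inseparable. Suppose for contradiction that $\bar{X}_0$ is \'etale. Walking outward from $\bar{X}_0$ along the tree of components, the chain containment of inertia groups (\cite{obus12}, Corollary 2.11) implies that every \'etale tail $\bar{X}_b$ carrying nonzero $\sigma_b$ is separated from $\bar{X}_0$ by a path along which inertia groups can only grow; combined with the fact that $\bar{X}_0$ has genus $1$ (being the strict transform of $\bar{E}$), this would force all blown-up structure to sit inside inseparable ``bubbles'' attached to $\bar{X}_0$, leaving the cover unramified over the generic point of $\bar{X}_0$. The vanishing cycles constraint $\sum(\sigma_b - 1) = 1$ together with the fractional-part identities $\langle \sigma_b \rangle = a_i/m$ from Proposition \ref{fractionpart} should then rule out the remaining configurations by exhibiting an $a_i$ that violates $0 < a_i < m$. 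This will be the main technical obstacle, requiring careful case analysis on the topology of the component tree and on whether the unique branch point specializes to $\bar{X}_0$ or to a new component.

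Finally, once $\bar{X}_0$ is known to be inseparable, property (ii) of Construction \ref{auxcover} identifies $f^{st}$ with $\mathrm{Ind}_{G^{aux}}^G (f^{aux})^{st}$ over an \'etale neighborhood of $\bar{X}_0$. Taking an induced cover preserves the local $\mu_{p^j}$-torsor structure that characterizes multiplicative reduction, and passage between $f^{aux}$ and its strong auxiliary quotient $f^{str}$ is harmless for this $p$-part analysis since $f^{str}$ is obtained from $f^{aux}$ by quotienting the maximal prime-to-$p$ normal subgroup. Thus multiplicative reduction of $f^{str}$ over $\bar{X}_0$ pulls back to multiplicative reduction of $f^{st}$ over $\bar{X}_0$, completing the argument.
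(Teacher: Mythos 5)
Your overall strategy is the paper's: form the strong auxiliary cover, apply Proposition \ref{auxtype} to get multiplicative type, apply Proposition \ref{multtype} to get multiplicative reduction over inseparable components, and transfer back to $f^{st}$ via the prime-to-$p$ quotient $f^{aux} \to f^{str}$ and property (ii) of Construction \ref{auxcover}. The paper's proof is exactly this three-step argument, and your first and last paragraphs reproduce it faithfully (your verification that $f^{str}$ actually ramifies is a reasonable thing to make explicit, though note that Proposition \ref{fractionpart} is stated for $\mathbb{Z}/p^s \rtimes \mathbb{Z}/m$-covers, so it should be invoked for the auxiliary cover and its branch points $x_b$ rather than for $f$ itself; this is harmless since effective ramification invariants are preserved by Construction \ref{auxcover}(iv)).

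The genuine gap is your middle paragraph. You correctly identify that one must know the original component $\bar{X}_0$ is inseparable before Proposition \ref{multtype} says anything about it, but the argument you offer is not a proof: the comparability of inertia groups of adjacent components (\cite{obus12}, Corollary 2.11) says only that one contains the other, not that inertia ``can only grow'' as one moves away from $\bar{X}_0$, and the remainder of the paragraph (``should then rule out the remaining configurations\ldots requiring careful case analysis'') is a statement of intent rather than an argument. The case analysis you defer is not actually how this is handled in the literature, and attempting it head-on would be painful. The standard fact --- due to Raynaud in the $\mathbb{Z}/p$ case and to Obus for cyclic Sylow $p$-subgroups --- is that for a cover with prime-to-$p$ branching indices whose branch points specialize to distinct smooth points, an \'etale original component forces good reduction: the restriction of $\bar{f}$ to $\bar{X}_0$ would be a tame, generically separable cover of the smooth special fiber, which deforms, so $\bar{Y}$ would be smooth. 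Hence bad reduction forces $\bar{X}_0$ to be inseparable. The paper takes this as known background when it applies Proposition \ref{multtype} to the original component; your write-up should either cite it or prove it by this deformation argument, not by the topological case analysis you sketch.
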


\begin{proof}

By Propositions \ref{multtype} and \ref{auxtype}, any semistable model of the strong auxiliary cover, $f^{str}$, has multiplicative reduction over the original component. Since $f^{str}$ is a prime-to-$p$ quotient of the auxiliary cover $f^{aux}$, any semistable model of $f^{aux}$ will also have multiplicative reduction over the original component. The cover, $f$, will be isomorphic to the disjoint union of copies of $f^{aux}$ over an \'etale neighborhood of the original component, so that the stable model of $f$ will also have multiplicative reduction over the original component, as we wanted.

\end{proof}

\section{Potentially Good Reduction}

Now let $f: Y \to X$ be a cover as in \S 2 with the additional requirement that $X \cong E$, an elliptic curve having good reduction over $K$.

\begin{lemma}
\label{unbranched}

Let $f: Y \to E$ be an \'etale cover defined over $K$ of the elliptic curve $E$, where $E$ has good reduction over $K$. Then $f$ has good reduction over $K$.

\end{lemma}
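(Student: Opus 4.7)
The plan is to reduce the statement to a question about elliptic curves and then invoke the criterion of Néron-Ogg-Shafarevich. Since $f: Y \to E$ is finite étale and $E$ is an elliptic curve, Riemann-Hurwitz gives $2g_Y - 2 = \deg(f) \cdot (2 \cdot 1 - 2) = 0$, so $Y$ is a smooth projective geometrically integral curve of genus $1$. The map $f$ is defined over $K$, and $G$ acts on $Y$ over $K$; in particular, the fiber of $f$ over the identity $O_E \in E(K)$ is a $G$-torsor over $K$, which (after a finite base change contained in $K$, or by noting that we may as well assume $K$ contains a chosen point $y_0$ in this fiber) gives a $K$-rational point on $Y$. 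Declaring $y_0$ to be the identity turns $Y$ into an elliptic curve and makes $f$ an isogeny of elliptic curves defined over $K$.

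Next, I would apply the Néron-Ogg-Shafarevich criterion. Fix a prime $\ell \neq p$. Since $E$ has good reduction over $K$, the inertia subgroup $I_K \subset \mathrm{Gal}(\bar K / K)$ acts trivially on the Tate module $T_\ell E$. The isogeny $f$ induces a map $T_\ell f : T_\ell Y \to T_\ell E$ which becomes an isomorphism of Galois representations after tensoring with $\mathbb{Q}_\ell$, so $I_K$ acts trivially on $V_\ell Y$, and hence on the lattice $T_\ell Y$. By Néron-Ogg-Shafarevich applied to $Y$, the elliptic curve $Y$ has good reduction over $K$ — no base change needed.

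Finally, I would upgrade this to the statement that the cover itself has good reduction. Let $\mathcal{E}$ and $\mathcal{Y}$ be the Néron models over $R$; both are abelian schemes since $E$ and $Y$ have good reduction. By the Néron mapping property, the isogeny $f$ extends uniquely to a homomorphism $\tilde f: \mathcal{Y} \to \mathcal{E}$ of abelian schemes over $R$, which is finite flat of degree $\deg(f)$ because its generic fiber is. The special fiber $\bar{\mathcal{Y}}$ is an elliptic curve over $k$, and in particular smooth and irreducible, so the stable model of $Y$ is $\mathcal{Y}$ itself; this gives the good reduction of $f$ in the sense of \S2.1.

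There is no serious obstacle here: the only point that requires a small remark is ensuring a $K$-rational point in the fiber above $O_E$ so that $Y$ becomes an elliptic curve over $K$ (rather than only geometrically), and this is either built into the hypothesis that $Y$ is a $K$-curve in the sense of \S1 or is achieved automatically once we have determined that $Y$ has good reduction and is therefore an abelian variety. Everything else is a direct application of Riemann-Hurwitz, Néron-Ogg-Shafarevich, and the Néron mapping property.
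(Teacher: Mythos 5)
Your proof is correct and follows essentially the same route as the paper: Riemann--Hurwitz forces $Y$ to be a genus-one curve and $f$ an isogeny, and then good reduction of $Y$ follows from that of $E$ because good reduction is an isogeny invariant (the paper simply cites Silverman, Corollary 7.2, for this, which is itself proved via N\'eron--Ogg--Shafarevich as you do). Your additional step extending $f$ to the N\'eron models is a reasonable elaboration of what the paper leaves implicit.
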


\begin{proof}

By the Riemann-Hurwitz formula, $Y$ must also be an elliptic curve, so $f$ is an isogeny. Isogenous elliptic curves either both have good or bad reduction over a given field (see, for instance, \cite{silverman}, Corollary 7.2); since $E$ has good reduction, so must $Y$, so that $f$ has good reduction, as we wanted.

\end{proof}

\begin{lemma}
\label{abelian}

Let $f: X \to E$ be one-point cover with abelian Galois group $G$. Then $f$ is, indeed, an \'{e}tale cover $f: X \to E$.

\end{lemma}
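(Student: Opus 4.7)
The plan is to show, via the abelianized \'etale fundamental group of $E$ minus its origin, that the inertia group at the branch point must already be trivial, so that $f$ is in fact unramified.

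Let $O \in E$ denote the putative branch point (by convention, the origin), and let $I \leq G$ denote the inertia subgroup at a geometric point of $X$ lying above $O$. Since branching is tame and $G$ is abelian, $I$ is a cyclic subgroup of $G$ independent of the chosen point above $O$. The cover $f$ corresponds to a continuous surjection $\rho: \pi_1^{\text{\'et}}(E \setminus \{O\}) \twoheadrightarrow G$ under which a generator of tame inertia at $O$ is sent to a generator of $I$; because $G$ is abelian, $\rho$ factors through the abelianization of $\pi_1^{\text{\'et}}(E \setminus \{O\})$.

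Since $K$ has characteristic zero, I would fix an embedding $K \hookrightarrow \mathbb{C}$ and invoke Grothendieck's comparison theorem to identify $\pi_1^{\text{\'et}}(E \setminus \{O\})$ with the profinite completion of $\pi_1^{\mathrm{top}}(E(\mathbb{C}) \setminus \{O\})$. The latter admits the standard presentation $\langle a, b, \gamma \mid [a,b]\gamma = 1 \rangle$ for a genus-one surface with one puncture, where $\gamma$ is a loop around $O$. In the abelianization, this relation forces the class of $\gamma$ to vanish, so $\rho(\gamma) = 1$ in $G$, and therefore $I$ is trivial.

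Since the inertia at $O$ is trivial and $O$ is the only potential branch point, $f$ is \'etale, as required. The one point to invoke carefully is the identification of the loop around $O$ with the inverse commutator $[a,b]^{-1}$ in the topological presentation; this is a standard fact about punctured Riemann surfaces, and it is where the elliptic-curve hypothesis is used essentially (the analogous statement fails for $\mathbb{P}^1$). As an alternative route, one could argue purely algebraically using Serre's generalized Jacobian: the generalized Jacobian of $E$ with modulus equal to the reduced point $O$ coincides with $J(E) = E$, so abelian tame covers of $E$ ramified only over $O$ correspond to isogenies of $E$ and are therefore \'etale.
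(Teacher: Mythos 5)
Your proof is correct and takes essentially the same route as the paper: both use the presentation $\langle a,b,c \mid [a,b]c=1\rangle$ of $\pi_1(E\setminus\{O\})$ and observe that the loop around the puncture, being a commutator, dies in the abelianization through which any map to an abelian $G$ must factor, so the inertia at $O$ is trivial. Your version is slightly more explicit about the comparison theorem and the inertia-generator bookkeeping, but the underlying argument is identical.
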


\begin{proof}

The \'etale fundamental group $\pi_1(E \setminus \{O\})$ of $E \setminus \{O\}$ has the presentation $\{a,b,c \, | \, aba^{-1}b^{-1}c=1\}$. The map $\pi_1(E \setminus \{O\}) \to G$ factors via the abelianization $\pi_1(E \setminus \{O\}) \to \pi_1(E \setminus \{O\})^{ab} = \{ab \, | \, aba^{-1}b^{-1} = 1\} = \pi_1(E)$, so that $f$ corresponds to an element of $\pi_1(E)$. So the map $f$ is, indeed, \'etale, as we wanted.

\end{proof}

In particular, one-point covers having abelian Galois groups must have good reduction.

The following is analogous to \cite{obus17}, Lemma 4.5:

\begin{lemma}
\label{thingy}

Let $f: Y \to E$ be a ramified cover, branched at $r \geq 1$ points, with tame branching indices; let $\bar{V}$ be an irreducible component above the original component above the original component with decomposition group $D_{\bar{V}}$. Then $m_{D_{\bar{V}}} > 1$.

\end{lemma}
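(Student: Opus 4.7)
The plan is a proof by contradiction, closely paralleling \cite{obus17}, Lemma 4.5. Suppose $m_{D_{\bar{V}}} = 1$. Since $f$ is ramified, Lemma \ref{abelian} rules out $G$ being abelian, so $f$ must have bad reduction. By Corollary \ref{multiplicativereduction}, the original component $\bar{E}_0$ is inseparable in the stable model, and by property (ii) of Construction \ref{auxcover} we can identify $D_{\bar{V}}$ with the decomposition group of a corresponding component of $(f^{aux})^{st}$ above $\bar{E}_0$. A finite group $H$ with cyclic Sylow $p$-subgroup satisfying $m_H = 1$ is a direct product of its Sylow $p$-subgroup and its maximal normal prime-to-$p$ subgroup, so the condition $m_{D_{\bar{V}}} = 1$ survives passage to the strong auxiliary cover $f^{str}$. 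Thus we may replace $f$ by $f^{str}$, with Galois group $G^{str} = \mathbb{Z}/p^s \rtimes \mathbb{Z}/m$ (where the $\mathbb{Z}/m$-action on $\mathbb{Z}/p^s$ is faithful); by Proposition \ref{auxtype}, $f^{str}$ is of multiplicative type.

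Next, I would extract the key structural consequence: faithfulness of the semidirect product action together with $m_{D_{\bar{V}}} = 1$ forces $D_{\bar{V}} \subseteq \mathbb{Z}/p^s$. Using the factorization $f^{str}: Y \to Z \to E$ (where $Z \to E$ is the tame $\mathbb{Z}/m$-cover and $Y \to Z$ is \'etale of degree $p^s$), the image of $D_{\bar{V}}$ in $\mathbb{Z}/m$ is trivial, so $\bar{Z}^{st}$ has exactly $m$ components above $\bar{E}_0$, each mapping isomorphically onto $\bar{E}_0$.

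The contradiction will then come from the $\mu_{p^j}$-torsor structure provided by multiplicative reduction. Because $D_{\bar{V}}$ is contained in a cyclic $p$-group, the conjugation action of $D_{\bar{V}}/I_{\bar{V}}$ on $I_{\bar{V}}$ is trivial; but the multiplicative reduction endows $I_{\bar{V}}$ with a natural $\mu_{p^j}$-Galois-module structure on which $D_{\bar{V}}/I_{\bar{V}}$ must act via the restriction of the cyclotomic character. Triviality of the action is then incompatible with the ramification bound on $K$ that places us in the setting of Theorem 1, since the cyclotomic character being trivial would require $\mu_{p^j} \subseteq K$, forcing $e_K \geq (p-1)p^{j-1} \geq p-1$ and violating the hypothesis $e_K < (p-1)/m_G$.

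The main obstacle is this last step: precisely identifying the $\mu_{p^j}$-Galois-module structure on $I_{\bar{V}}$ coming from the multiplicative reduction side with the trivial inner-conjugation action on the group-theoretic side, and pinning down the resulting containment $\mu_{p^j} \subseteq K$ that clashes with the ramification hypothesis. An alternative route would be to extract a purely geometric contradiction from the interplay between the free $\mathbb{Z}/m$-permutation action on the $m$ copies of $\bar{E}_0$ in $\bar{Z}^{st}$ and the $\chi$-twisted $\mu_{p^s}$-torsor class of $Y \to Z$ in $H^1(\bar{E}_0, \mu_{p^s}) \cong \bar{E}_0[p^s]$.
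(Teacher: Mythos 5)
Your opening reduction matches the paper: pass to the strong auxiliary cover, and observe that $m_{D_{\bar{V}}}=1$ together with the faithful action in $\mathbb{Z}/p^s \rtimes \mathbb{Z}/m$ forces $D_{\bar{V}}$ to be a $p$-group, so that it suffices to rule that out. But the step you yourself flag as the main obstacle is where the argument genuinely fails, for two reasons. First, your contradiction is extracted from the hypothesis $e_K < (p-1)/m_G$, which is \emph{not} a hypothesis of Lemma \ref{thingy}; the lemma is a statement about the special fiber that must hold over an arbitrary finite extension $K$ of $K_0$ --- in particular over one containing $\mu_{p^s}$, where the cyclotomic-character obstruction you propose simply vanishes. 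Second, the clash between multiplicative reduction over the original component and the bound on $e_K$ is precisely the content of Proposition \ref{badreduction} (i.e.\ \cite{obus17}, Proposition 5.1), whose proof takes Lemma \ref{thingy} as an input in the role of \cite{obus17}, Lemma 4.5; rederiving the lemma from that mechanism is circular. (A smaller slip: ``$f$ is ramified and $G$ is non-abelian, hence $f$ has bad reduction'' is a non sequitur; Lemma \ref{abelian} is used only to guarantee that $f^{str}$ has more than one branch point.)

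The paper's actual argument is elementary and purely geometric, with no reference to $K$. Having reduced to showing that $D_{\bar{V}}$ is not a $p$-group, it suffices to show that the tame $\mathbb{Z}/m_{G^{str}}$-quotient $Z \to E$, given birationally by $z^m = f_r \cdot f_u$ with $\operatorname{div}(f_r) \sim \sum a_i P_i$ and $0 < a_i < m$, does not split completely over the original component, i.e.\ that $f_r$ does not reduce to an $m$th power in $k(E)$. Since the branch points specialize to distinct points of the special fiber and, by Proposition \ref{fractionpart} and Theorem \ref{vanishingcycles}, $\sum a_i = mr$ with each $0 < a_i < m$, the reduced divisor cannot be $m$-divisible, which gives the conclusion. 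This is the key idea missing from your proposal; your closing ``alternative route'' gestures toward it but is not developed.
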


\begin{proof}

It suffices to work with the strong auxiliary cover $f^{str}$, which, in light of Lemma \ref{abelian}, we assume has more than one branch point. It will be enough to show that the decomposition group of a component above the original component is not a $p$-group.

The cover $f^{str}$ has a $\mathbb{Z}/m_{G_{str}}$-quotient cover given (as, for instance, in \S 2) birationally by $z^m = f_r \cdot f_u$, where the rational function $f_r$ corresponds to the divisor $\sum_1^{\rho} a_i P_i$ on $E$, with $0 < a_i < m$. It then suffices to show that the reduction of $f_r$ is not an $m^{th}$ power in $k(X)$.

The branch points of $f$ specialize to distinct points on the special fiber, so we have at least $r$ different residue classes among them. By Proposition \ref{fractionpart} and Theorem \ref{vanishingcycles}, $\sum_{i=1}^{\rho} a_i = m \cdot r$, so that some subset of the $a_i$ satisfy $0 < \sum a_{i} < m$. So the factor $f_r$ does not reduce to an $m^{th}$ power, as we wanted.

\end{proof}

The following is the key step in the proof of good reduction and is analogous to \cite{obus17}, Proposition 5.1. We recall the statement; the proof is identical in our situation, with Lemma \ref{thingy} assuming the role of \cite{obus17}, Lemma 4.5.

\begin{prop}
\label{badreduction}

Let $G$ be a finite group with nontrivial cyclic Sylow $p$-subgroup; let $k$ be an algebraically closed field of characteristic $p$; let $K_0$ = $\text{Frac}(W(k))$ and let $K$ be a finite extension with $e(K) < \frac{p-1}{m_G}$, where $e(K)$ is the absolute ramification index of $K$; let $f: Y \to E$ be a $G$-cover defined over $K$, where $E$ is an elliptic curve having good reduction over the valuation ring $R$ of $K$, branched at the distinct $K$-points $\{x_1, \ldots, x_r \}$, with $r \geq 1$; suppose that the branch points of $f$ specialize to distinct points on the special fiber of the good model of $E$ over $R$. If $f$ has bad reduction, then the stable model of $f$ does not have multiplicative reduction over the original component.

\end{prop}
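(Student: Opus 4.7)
The plan is to argue by contradiction: suppose $f$ has bad reduction and that $f^{st}$ has multiplicative reduction above the original component $\bar{E}_0$. The goal is to extract a numerical incompatibility between the absolute ramification bound $e(K) < (p-1)/m_G$ and the lower bound on effective ramification invariants forced by the vanishing cycles formula.

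The first move is to pass to the strong auxiliary cover $f^{str}: Y^{str} \to E^{str}$. Since effective ramification invariants are preserved under this passage (Construction \ref{auxcover}(iv)) and $X^{str} = X$, the assumption on bad reduction carries over. If $f^{str}$ is unramified, Lemma \ref{abelian} and Lemma \ref{unbranched} give good reduction of $f^{str}$ and hence, via the local isomorphism provided by Construction \ref{auxcover}(ii), of $f$ itself, contradicting the hypothesis. Otherwise, Proposition \ref{auxtype} ensures $f^{str}$ is of multiplicative type, and Proposition \ref{multtype} propagates multiplicative reduction to every inseparable component of any semistable model, in particular the original one.

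Next I would apply Lemma \ref{thingy} to obtain a component $\bar{V}$ of $\bar{Y}^{str}$ lying over $\bar{E}_0$ whose decomposition group satisfies $m_{D_{\bar{V}}} > 1$. The $\mu_{p^s}$-torsor structure $B/A$ on the neighborhood of the generic point of $\bar{V}/N$, guaranteed by multiplicative reduction, is equivariant with respect to the residual prime-to-$p$ action of $D_{\bar{V}}/N$; this action must factor through $\mathrm{Aut}(\mu_{p^s}) = (\mathbb{Z}/p^s)^\times$, forcing $m_{D_{\bar{V}}} \mid p-1$ and identifying the Kummer extension explicitly.

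The concluding step is the numerical comparison. Combining Proposition \ref{fractionpart} (which pins down $\langle \sigma_b \rangle = a_i/m$ at each étale tail meeting an inseparable component), Lemma \ref{bounds} (which gives $\sigma_b \geq 1 + 1/m_G$ on each new tail), and the elliptic vanishing cycles formula $r = \sum_{B_{new}}(\sigma_b - 1) + \sum_{B_{prim}}\sigma_b$ from Theorem \ref{vanishingcycles}, one produces a lower bound for the Kummer depth of the torsor at each intersection of $\bar{E}_0$ with an étale tail. On the other hand, the minimality of $K^{st}/K$ together with $m_{D_{\bar{V}}} \le m_G$ bounds the same depth from above by a multiple of $e(K)\,m_G/(p-1)$, and the assumption $e(K) < (p-1)/m_G$ makes these bounds incompatible. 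The principal obstacle will be this last step: reproducing the depth-computation of \cite{obus17}, Proposition 5.1, in our elliptic setting and verifying that Lemma \ref{thingy} really does play the role of \cite{obus17}, Lemma 4.5, with no reliance on the rationality of the base curve.
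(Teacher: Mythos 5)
Your skeleton is the right one, and it matches the paper's intent: the paper's proof of Proposition \ref{badreduction} is simply the proof of \cite{obus17}, Proposition 5.1, transported verbatim, with Lemma \ref{thingy} substituting for \cite{obus17}, Lemma 4.5. You correctly identify the reduction to the strong auxiliary cover, the role of Lemma \ref{thingy} in producing a component $\bar{V}$ over the original component with $m_{D_{\bar{V}}} > 1$, and the fact that the contradiction must come from pitting the $\mu_{p^j}$-torsor structure against the bound $e(K) < \frac{p-1}{m_G}$. However, the step you yourself flag as ``the principal obstacle'' is in fact the entire content of the proposition, and your description of it does not reflect how the argument actually runs. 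The contradiction is not obtained by bounding a ``Kummer depth'' at the intersections of $\bar{E}_0$ with \'etale tails using Proposition \ref{fractionpart}, Lemma \ref{bounds}, and Theorem \ref{vanishingcycles} --- those inputs are consumed earlier, in Proposition \ref{auxtype} and in the proof of Lemma \ref{thingy}, and play no direct role here. The actual mechanism is Galois-theoretic: the original component is the reduction of the good model of $E$ over $R$ itself, so the completed local ring $A$ at its generic point is already defined over $R$, and $\mathrm{Gal}(\bar{K}/K)$ acts on the putative $\mu_{p^j}$-torsor $B/A$. Comparing the natural $\mathrm{Gal}(\bar{K}/K)$-action on $\mu_{p^j}$ (the mod-$p$ cyclotomic character, whose image on $\Gamma_K$ has order at least $\frac{p-1}{e(K)}$ because $k$ is algebraically closed and $[K:K_0]=e(K)$) with the conjugation action of the tame quotient of $D_{\bar{V}}$ on its $p$-part (a character of order $m_{D_{\bar{V}}} \le m_G$) forces $\frac{p-1}{e(K)} \le m_G$, i.e.\ $e(K) \ge \frac{p-1}{m_G}$, the desired contradiction. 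Your proposal never performs this comparison, so as written it does not prove the statement.

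Two smaller points. First, your invocation of Proposition \ref{auxtype} and Proposition \ref{multtype} to ``propagate multiplicative reduction to the original component'' is both redundant and out of scope: multiplicative reduction over the original component is the hypothesis you are assuming for contradiction, and in any case Proposition \ref{auxtype} applies only to one-point covers, whereas Proposition \ref{badreduction} is stated for $r \ge 1$ branch points (it is Corollary \ref{multiplicativereduction}, not Proposition \ref{badreduction}, that carries the ``multiplicative reduction must occur'' half of the argument, and the two halves must be kept separate for the main theorem to close). Second, your observation that the prime-to-$p$ action factors through $\mathrm{Aut}(\mu_{p^j})$ and hence $m_{D_{\bar{V}}} \mid p-1$ is in the right spirit --- it is the group-scheme shadow of the character comparison above --- but by itself it yields no contradiction with $e(K) < \frac{p-1}{m_G}$; you need the arithmetic of $K$, via the cyclotomic character, to enter.
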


We then obtain the following by applying this to the $r=1$ case.

\begin{theorem}

Let $G$ be a finite group with cyclic Sylow $p$-subgroup. Let $K$ be as in Proposition \ref{badreduction} and let $f: Y \to E$ be a one-point $G$-cover defined over $K$. Then $f$ has potentially good reduction.

\end{theorem}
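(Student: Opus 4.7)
The plan is to derive this theorem as an immediate contradiction between the two main results of \S 3 and \S 4, namely Corollary \ref{multiplicativereduction} and Proposition \ref{badreduction}. I will argue by contradiction: if $f$ has bad reduction, then the stable model of $f$ both must and cannot have multiplicative reduction over the original component.

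Before running this argument I first dispose of the degenerate possibility that $f$ is in fact \'etale. If the branching index at the single marked point is $1$, then Lemma \ref{unbranched} already delivers good reduction, since $f$ must then be an isogeny of elliptic curves and isogenies preserve good reduction. Equivalently, Lemma \ref{abelian} handles all abelian $G$ at once. So I may assume that $f$ is genuinely ramified at its $r=1$ branch point, with a tame branching index (by the standing assumptions of \S 2), placing us squarely in the setting to which both Corollary \ref{multiplicativereduction} and Proposition \ref{badreduction} apply.

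Now suppose $f$ has bad reduction. Corollary \ref{multiplicativereduction} forces the stable model $f^{st}: Y^{st} \to E^{st}$ to have multiplicative reduction over the original component. On the other hand, the hypothesis $e(K) < (p-1)/m_G$ is precisely what Proposition \ref{badreduction} requires, and that proposition, applied with $r=1$, rules out multiplicative reduction of the stable model over the original component. This contradiction forces $f$ to have potentially good reduction. The main obstacle at this stage is essentially absent, since all the technical content has been absorbed into the earlier analysis of the auxiliary and strong auxiliary covers, the multiplicative-type condition, and the fractional parts of the effective ramification invariants; the present theorem is simply the point at which those threads converge, with the sole additional care being the separation of the \'etale case from the ramified case via Lemmas \ref{unbranched} and \ref{abelian}.
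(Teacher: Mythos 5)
Your proof is correct and follows essentially the same route as the paper's: dispose of the \'etale case via Lemma \ref{unbranched}, then in the ramified case derive a contradiction between Corollary \ref{multiplicativereduction} and Proposition \ref{badreduction} when $f$ has bad reduction. The one small discrepancy is that you take tameness of the branching index as a ``standing assumption of \S 2,'' whereas the theorem is stated for an arbitrary one-point $G$-cover, so tameness must actually be \emph{derived}; the paper does this by citing Raynaud's Lemme 4.2.13.
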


\begin{proof}

If $f$ is unbranched, then $f$ has good reduction by Lemma \ref{unbranched}. Otherwise, \cite{raynaud99}, Lemme 4.2.13, implies that the branching indices of $f$ are tame. If $f$ were to have bad reduction, Corollary \ref{multiplicativereduction} would imply that $f$ has multiplicative reduction over the original component, contrary to Proposition \ref{badreduction}. So $f$ has potentially good reduction.

\end{proof}

\bibliographystyle{plain}
\bibliography{bibliography}

\end{document}